\documentclass[12pt]{article}

\usepackage{amsmath, amssymb,amsthm}

\usepackage{color}
\definecolor{darkred}{RGB}{139,0,0}
\definecolor{darkgreen}{RGB}{0,100,0}
\definecolor{darkmagenta}{RGB}{139,0,139}
\definecolor{darkpurple}{RGB}{110,0,180}
\definecolor{darkblue}{RGB}{40,0,200}
\definecolor{darkorange}{RGB}{255,140,0}

\setlength{\bigskipamount}{5ex plus1.5ex minus 2ex}
\setlength{\textheight}{24cm} \setlength{\textwidth}{16cm}
\setlength{\hoffset}{-1.3cm} \setlength{\voffset}{-2.5cm}

\newcommand{\bsx}{\boldsymbol{x}}

\newcommand{\bszero}{\boldsymbol{0}}
\newcommand{\bsj}{\boldsymbol{j}}
\newcommand{\bsm}{\boldsymbol{m}}

\newcommand{\bsy}{\boldsymbol{y}}
\newcommand{\bst}{\boldsymbol{t}}
\newcommand{\rd}{\,{\rm d}}

\newcommand{\bmo}{{\rm BMO}}

\newcommand{\NN}{\mathbb{N}}

\newcommand{\DD}{\mathbb{D}}

\newcommand{\cP}{\mathcal{P}}

\newtheorem{theorem}{Theorem}
\newtheorem{remark}{Remark}
\newtheorem{lemma}{Lemma}

\title{The BMO-discrepancy suffers from the curse of dimensionality}

\author{Friedrich Pillichshammer\thanks{The author is supported by the Austrian Science Fund (FWF): Project F5509-N26, which is a part of the Special Research Program ``Quasi-Monte Carlo Methods: Theory and Applications''.}}

\date{}

\begin{document}

\maketitle

\begin{abstract}
We show that the minimal discrepancy of a point set in the $d$-dimensional unit cube with respect to the BMO seminorm suffers from the curse of dimensionality.
\end{abstract}

\centerline{\begin{minipage}[hc]{130mm}{
{\em Keywords:} Discrepancy, BMO seminorm, tractability, quasi-Monte Carlo\\
{\em MSC 2010:} 11K38, 65C05, 65Y20}
\end{minipage}}

\vspace*{1cm}

The discrepancy is a quantitative measure for the irregularity of distribution of an $N$-element point set $\cP = \{\bsx_1, \bsx_2, \ldots, \bsx_N\}$ in the unit cube $[0,1)^d$ (see, e.g., \cite{BC,DT,kuinie,matou}). This concept has important applications in numerical analysis, where so-called Koksma-Hlawka inequalities establish a deep connection between norms of the discrepancy and worst-case errors of quasi-Monte Carlo rules (see, e.g, \cite{DKP22,DP10,nie92,NW10}). 

For a measurable subset $B \subseteq [0,1)^d$ the {\it local discrepancy} $\Delta_{\cP}$ defined as  
\begin{equation*}
\Delta_{\cP}(B) := \frac{|\{ j \in \{1, 2, \ldots, N\} \ : \ \bsx_j \in B \}| }{N} - \lambda_d(B),
\end{equation*}
where $\lambda_d$ stands for the $d$-dimensional Lebesgue measure. Choosing a suitable class of ``test sets'' $B$ and taking a norm $\|\cdot \|_{\bullet}$ of the local discrepancy with respect to the considered test sets leads to a discrepancy $\|\Delta_{\cP} \|_{\bullet}$ of the point set $\cP$. An important choice is the class of subintervals of $[0,1)^d$ that are anchored in the origin, i.e., intervals of the form $[\bszero,\bst)=[0,t_1)\times \ldots \times [0,t_d)$, where $\bst=(t_1,\ldots,t_d) \in [0,1]^d$. Taking the $L_p$-norm of the function $\bst \mapsto \Delta_{\cP}([\bszero,\bst))$ gives the usual {\it (star) $L_p$-discrepancy} $$L_{p,N}(\cP):=\|\Delta_{\cP}([\bszero,\cdot))\|_{L_p}=\left(\int_{[0,1]^d} |\Delta_{\cP}([\bszero,\bst))|^p \rd \bst\right)^{1/p} \quad \mbox{for $1\le p < \infty$}$$ with the usual adaptions for $p=\infty$, i.e., $$L_{\infty,N}(\cP):= \|\Delta_{\cP}([\bszero,\cdot))\|_{L_\infty}=\sup_{\bst \in [0,1]^d}|\Delta_{\cP}([\bszero,\bst))|.$$ The star $L_{\infty}$-discrepancy is often simply called {\it star-discrepancy} and is denoted by $D_N^{\ast}$. 

Another example are arbitrary subintervals of $[0,1)^d$ as test sets of the form $[\bsx,\bsy)=[x_1,y_1)\times \ldots \times [x_d,y_d)$ with $\bsx \le \bsy$, where $\bsx=(x_1,\ldots,x_d)$ and $\bsy=(y_1,\ldots,y_d)$ in $[0,1]^d$ and where $\bsx \leq \bsy$ means $x_i\leq y_i$ for all $i \in \{1,\ldots,d\}$. Again, taking the $L_p$-norm with respect to $\bsx \le \bsy$ leads to the so-called {\it extreme (or unanchored) $L_p$-discrepancy} 
\begin{equation}\label{def:extrL2}
L_{p,N}^{\mathrm{extr}}(\cP):=\left(\int_{[0,1]^d}\int_{[0,1]^d,\, \bsx\leq \bsy} |\Delta_{\cP}([\bsx,\bsy))|^p\rd \bsx\rd\bsy\right)^{1/p},
\end{equation}
again with the usual adaptions if $p=\infty$. Choosing so-called ``periodic boxes'' as test sets leads to the notion of periodic $L_p$-discrepancy (or diaphony in the special case $p=2$). See \cite{HKP20,KrPi22} for more details.

Besides intervals as test sets and the $L_p$-norms also other norms of the discrepancy function are considered and studied (see, e.g., \cite{BC,matou,NW10,T10}). In this context the exponential Orlicz norms and the BMO (semi)norm attracted much attention in recent years (see \cite{B11,BLPV,BM15,DHMP,DHPP}). 

Often discrepancies are directly related to the worst-case integration error of quasi-Monte Carlo rules for a suitable class of integrands (see, e.g., \cite[Chapter~9]{NW10}). For this reason, point sets with low discrepancy have an important application in numerical analysis. This motivates the study of the {\it $N$-th minimal discrepancy} with respect to the norm $\|\cdot \|_{\bullet}$ in dimension $d$ which is the best possible discrepancy over all $N$-point sets in $[0,1]^d$, i.e.,
\begin{equation*}
\mathrm{disc}_{\bullet}(N,d) := \inf_{\stackrel{\cP \subseteq [0,1]^d}{|\cP| = N}} \| \Delta_{\cP}\|_{\bullet}.
\end{equation*}
In many cases, for fixed dimension $d$, asymptotically optimal bounds on $\mathrm{disc}_{\bullet}(N,d)$ in $N$ are available. However, in practical integration problems, the dimension $d$ may be very large and then these optimal bounds are often useless, since they yield no useful information within the pre-asymptotic regime of $N$. This problem is studied in the discipline ``information based complexity''. 

The so-called {\it initial discrepancy} is defined as the discrepancy of the empty point set $\| \Delta_{\emptyset} \|_{\bullet}$. Since this quantity may depend on the dimension, it is used to normalize the $N$-th minimal discrepancy when the dependence of $\mathrm{disc}_{\bullet}(N,d)$ on $d$ (and some error threshold $\varepsilon$) is studied. Therefore the {\it inverse of the $N$-th minimal discrepancy} in dimension $d$ is defined as the number $N_{\bullet}(\varepsilon, d)$ to be the smallest number $N$ such that a point set with $N$ points exists that reduces the initial discrepancy at least by a factor of $\varepsilon\in(0,1)$, i.e.,
\begin{equation*}
N_{\bullet}(\varepsilon, d) := \min \big\{ N \in \mathbb{N}\,:\, \mathrm{disc}_{\bullet}(N, d) \le \varepsilon \|\Delta_\emptyset \|_{\bullet} \big\}.
\end{equation*}

Now a discrepancy is said to {\it suffer from the curse of dimensionality} if it has the unfavourable property that its inverse grows exponentially with the dimension $d$, i.e., if there exist numbers $C, \tau \in (0,\infty)$ and $\varepsilon_0 \in (0,1)$ such that $$N_{\bullet}(\varepsilon, d) \ge C(1+\tau)^d \quad \mbox{for all $\varepsilon \in (0,\varepsilon_0)$ and infinitely many $d\in \NN$.}$$ Otherwise, if this is not the case, one says that the discrepancy is {\it tractable}. There are many notions of tractability, which characterize the growth rate of $N_{\bullet}(\varepsilon, d)$ when $\varepsilon \rightarrow 0$ and $d \rightarrow \infty$, which in any case has to be sub-exponential. An important notion is {\it polynomial tractability}, which holds whenever there are numbers $C, \tau, \sigma \in(0,\infty)$ such that
\begin{equation*}%\label{condPTdef}
N_{\bullet}(\varepsilon, d) \le C\, d^\tau \varepsilon^{-\sigma} \quad \mbox{for all } \varepsilon \in (0,1) \mbox{ and all } d \in \mathbb{N}.
\end{equation*}
Another, weaker notion is {\it weak tractability}, which means that  
\begin{equation*}
\lim_{d+\varepsilon^{-1}  \to \infty} \frac{\log N_{\bullet}(\varepsilon, d)}{d+\varepsilon^{-1}} = 0.
\end{equation*}

The subject of tractability of multivariate problems and in particular of discrepancy is a very popular and active area of research. We refer to the books \cite{NW08,NW10,NW12} by Novak and Wo\'{z}niakowski for an introduction and an exhaustive exposition.

It is known from a famous result by Heinrich, Novak, Wasilkowski, and Wo\'{z}niakowski~\cite{HNWW} that the star $L_\infty$-discrepancy is polynomially tractable. On the other hand, the star $L_2$-discrepancy is known to suffer from the curse of dimensionality, as shown by Wo\'{z}niakowski \cite{W99} (see also \cite{NW10}). Also the extreme- (see \cite[Section~10.5.3, p.~94]{NW10}) and the periodic- (see \cite{DHP}) $L_2$-discrepancy suffer from the curse of dimensionality. The behavior of the $L_p$-discrepancies in between, where $p \notin \{2, \infty\}$, seems to be unknown.

It is now a natural and instructive question to ask what happens in intermediate spaces ``close'' to $L_{\infty}$. Standard examples of such spaces are the space $\bmo$, which stands for {\it bounded mean oscillation}, or exponential Orlicz spaces. The study of these spaces in discrepancy theory with the aim to understand the precise nature of the kink that occurs at the passage from the $L_p$ with finite $p$ to the $L_\infty$ norm was initiated by Bilyk, Lacey, Parissis, and Vagharshakyan in \cite{BLPV}. Other papers in this context followed, we refer to \cite{B11,BLPV,BM15,DHMP,DHPP}.

The dependence of the inverse of discrepancy with respect to Orlicz norms on the dimension has been studied recently in \cite{DHPP} with quite positive results showing that the discrepancy with respect to suitable Orlicz norms can exhibit both polynomial and weak tractability (see \cite[Theorem~1 and 2]{DHPP}). In this note we supplement the BMO case.

For the definition we need the concept of Haar functions: Let $\NN_0=\NN \cup \{0\}$ and $\NN_{-1}=\NN_0\cup\{-1\}$. Let $\DD_j = \{0,1,\ldots, 2^j-1\}$ for $j \in \NN_0$ and $\DD_{-1} = \{0\}$. For $\bsj = (j_1,\dots,j_d)\in\NN_{-1}^d$ let $\DD_{\bsj} = \DD_{j_1}\times\ldots\times \DD_{j_d}$. For $\bsj\in\NN_{-1}^d$ we write $|\bsj| = \max(j_1,0) + \cdots + \max(j_d,0)$.

For $j \in \NN_0$ and $m \in \DD_j$ we call the interval $I_{j,m} = \big[ 2^{-j} m, 2^{-j} (m+1) \big)$ the $m$-th dyadic interval in $[0,1)$ on level $j$. We put $I_{-1,0}=[0,1)$ and call it the $0$-th dyadic interval in $[0,1)$ on level $-1$. Let $I_{j,m}^+ = I_{j + 1,2m}$ and $I_{j,m}^- = I_{j + 1,2m+1}$ be the left and right half of $I_{j,m}$, respectively. For $\bsj \in \NN_{-1}^d$ and $\bsm = (m_1, \ldots, m_d) \in \DD_{\bsj}$ we call $I_{\bsj,\bsm} = I_{j_1,m_1} \times \ldots \times I_{j_d,m_d}$ the $\bsm$-th dyadic interval in $[0,1)^d$ on level $\bsj$. We call the number $|\bsj|$ the order of the dyadic interval $I_{\bsj,\bsm}$. Its volume is $2^{-|\bsj|}$.

Let $j \in \NN_{0}$ and $m \in \DD_j$. Let $h_{j,m}$ be the function on $[0,1)$ with support in $I_{j,m}$ and the constant values $1$ on $I_{j,m}^+$ and $-1$ on $I_{j,m}^-$. We put $h_{-1,0} = \mathbf{1}_{I_{-1,0}}$ on $[0,1)$ (where $\mathbf{1}_A$ denotes the indicator function of a set $A$). The function $h_{j,m}$ is called the {\it $m$-th dyadic Haar function on level $j$}.

Let $\bsj \in \NN_{-1}^d$ and $\bsm \in \DD_{\bsj}$. The function $h_{\bsj,\bsm}$ given as the tensor product
\[ h_{\bsj,\bsm}(\bsx) = h_{j_1,m_1}(x_1) \cdots h_{j_d,m_d}(x_d) \]
for $\bsx = (x_1, \ldots, x_d) \in [0,1)^d$ is called a {\it dyadic Haar function} on $[0,1)^d$. The system of dyadic Haar functions $h_{\bsj,\bsm}$ for $\bsj \in \NN_{-1}^d, \, \bsm \in \DD_{\bsj}$ is called {\it dyadic Haar basis on $[0,1)^d$}.

It is well known that the system
\[ \left\{2^{\frac{|\bsj|}{2}}h_{\bsj,\bsm} \,:\,\bsj\in\NN_{-1}^d,\,\bsm\in \DD_{\bsj}\right\} \]
is an orthonormal basis of $L_2([0,1)^d)$. For any function $f\in L_2([0,1)^d)$ we have Parseval's identity
\[ \|f\|_{L_2}^2 = \sum_{\bsj \in \NN_{-1}^d} 2^{|\bsj|} \sum_{\bsm\in \DD_{\bsj}}|\langle f,h_{\bsj,\bsm}\rangle|^2, \] where $\langle \cdot , \cdot \rangle$ denotes the usual $L_2$-inner product, i.e., $\langle f ,g \rangle =\int_{[0,1]^d} f(\bsx) g(\bsx) \rd \bsx$. The terms $\langle f,h_{\bsj,\bsm}\rangle$ are called the {\it Haar coefficients} of the function $f$.

For an integrable function $f:[0,1]^d \rightarrow \mathbb{R}$ define 
\begin{equation}\label{eq:defbmo}
\|f\|^2_{\bmo} :=\sup_{U \subseteq [0,1)^d} \frac{1}{\lambda_d(U)} \sum_{\bsj \in \mathbb{N}_0^d} 2^{|\bsj|} \sum_{\bsm \in \DD_{\bsj}\atop I_{\bsj,\bsm} \subseteq U}|\langle f, h_{\bsj,\bsm}\rangle|^2,
\end{equation} 
where the supremum is taken over all measurable sets $U \subseteq [0,1)^d$. 

In order to give an intuition, consider $d=1$ and assume that $U \subseteq [0,1)$ is a dyadic interval. Let $\langle f \rangle_U:=\lambda_1(U)^{-1} \int_U f(x)\rd x$ be the mean of $f$ over $U$. Then it is easy to show that $\langle (f- \langle f\rangle_U) {\bf 1}_U, h_{-1,0}\rangle=0$ and, for $j \in \NN_0$ and $m \in \DD_j$, $$\langle (f- \langle f\rangle_U) {\bf 1}_U , h_{j,m}\rangle = \left\{ 
\begin{array}{ll}
\langle f, h_{j,m}\rangle & \mbox{if $I_{j,m} \subseteq U$,}\\
0 & \mbox{otherwise,}
\end{array}
\right.$$
where ${\bf 1}_U$ is the indicator function of $U$. Hence we have by Parseval's identity, that 
\begin{eqnarray*}
\frac{1}{\lambda_1(U)}\int_U|f(x)- \langle f \rangle_U|^2 \rd x & = &  \frac{1}{\lambda_1(U)}\int_0^1|(f(x)- \langle f \rangle_U){\bf 1}_U(x)|^2 \rd x\\
& = & \frac{1}{\lambda_1(U)} \sum_{j=0}^{\infty} 2^j \sum_{m \in \DD_j \atop I_{j,m}\subseteq U} |\langle f, h_{j,m}\rangle|^2,
\end{eqnarray*}
where the right hand side is exactly the term that appears in right hand side of \eqref{eq:defbmo}. 

The BMO space contains all integrable functions $f$ with finite norm $\|f\|_{\bmo}$. Note that strictly speaking $\|f\|_{\bmo}$ is only a seminorm, since it vanishes on linear combinations of functions which are constant in one or more coordinate directions. This means that formally we need to consider a factor space over such functions. We also note that there are different definitions of BMO spaces in the multivariate case. The setting introduced here is the so-called dyadic product BMO as introduced by Bernard~\cite{be} and as usually studied in discrepancy theory (see, e.g., \cite{B11,BLPV,BM15,DHMP}). As mentioned in \cite{B11}, with the present definition of the BMO norm the famous $H^1$ - BMO duality is preserved, where $H^1$ stands for the Hardy space\footnote{I.e., the space of functions $f \in L_1$ with integrable Littlewood-Paley square function.} and just as $H^1$ often serves as a natural substitute of the $L_1$-space, in many problems of harmonic analysis the BMO-space naturally replaces the $L_{\infty}$-space. For a more detailed study of BMO spaces see \cite{CF80} and \cite{CWW85}.

We consider the $\bmo$-seminorm of the discrepancy function $\Delta_{\cP}$ with respect to anchored subintervals of $[0,1)^d$ and call this the {\it $\bmo$-discrepancy}. The inverse of this $\bmo$-discrepancy is denoted by $N_{\bmo}(\varepsilon,d)$ for $d \in \NN$ and $\varepsilon \in (0,1)$. 

In \cite{BLPV,BM15,DHMP} Roth-type lower bounds on the BMO-discrepancy are shown (see Remark~\Ref{re:rt} at the end of the present note) which show that in the context of discrepancy the BMO-norm behaves more like $L_p$ with finite $p$ rather than $L_{\infty}$. This phenomenon can be also observed in the context of dependence of the inverse of discrepancy on the dimension. It is the aim of this note to show that also there the BMO-norm behaves more like the $L_2$-case\footnote{The general $L_p$-case for finite $p\not=2$ is still open.} rather than $L_{\infty}$.

\begin{theorem}\label{thm1}
For $d \in \NN$ and $\varepsilon \in (0,1)$ we have $$N_{\bmo}(\varepsilon,d) \ge \left(\frac{4}{3}\right)^d (1-\varepsilon^2).$$ In particular, the BMO-discrepancy suffers from the curse of dimensionality.
\end{theorem}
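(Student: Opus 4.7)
The plan is to bound $\|\Delta_\cP\|_{\bmo}^2$ from below by choosing the single test set $U_0=[0,1)^d$ in the supremum \eqref{eq:defbmo}, which yields the Haar-sum inequality
\[
\|\Delta_\cP\|_{\bmo}^2\;\ge\;Q(\cP)\;:=\;\sum_{\bsj\in\NN_0^d}2^{|\bsj|}\sum_{\bsm\in\DD_{\bsj}}\bigl|\langle\Delta_\cP,h_{\bsj,\bsm}\rangle\bigr|^2,
\]
in the spirit of the Wo\'zniakowski lower bound for the $L_2$-star discrepancy. Writing $\Delta_\cP(\bst)=\tfrac{1}{N}\sum_{n=1}^{N}\mathbf{1}_{[\bsx_n,\bsone)}(\bst)-\prod_{i=1}^{d}t_i$ and using the tensor-product form of $h_{\bsj,\bsm}$, the computation factors coordinatewise. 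The three required one-dimensional Parseval-type sums are evaluated by subtracting the excluded $j=-1$ term from the usual $L_2$-Parseval identity; together with the explicit value $\langle t,h_{j,m}\rangle=-2^{-2j-2}$, this produces the Brownian-bridge covariance $\rho(x,y)=\min(x,y)-xy$, its diagonal $\phi(x)=x(1-x)$, and the mixed kernel $\psi(x)=\tfrac{1}{2}x(1-x)$, as well as the constant $S_1:=\sum_{\bsj,\bsm}2^{|\bsj|}|\langle\prod_i t_i,h_{\bsj,\bsm}\rangle|^2=(1/12)^d$.

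Substituting into the expansion of $|\langle\Delta_\cP,h_{\bsj,\bsm}\rangle|^2$, I expect
\[
Q(\cP)\;=\;(1/12)^d-\frac{2}{N}\sum_{n=1}^{N}\prod_{i=1}^{d}\psi(x_{n,i})+\frac{1}{N^2}\sum_{n,m=1}^{N}\prod_{i=1}^{d}\rho(x_{n,i},x_{m,i}).
\]
Pointwise positivity of $\rho$ on $[0,1]^2$ makes every off-diagonal $(n\ne m)$ term in the double sum nonnegative, so the double sum dominates its diagonal $\sum_n\prod_i\phi(x_{n,i})$. Completing the square at each fixed $n$ via $\tfrac{B_n}{N^2}-\tfrac{2A_n}{N}\ge -A_n^2/B_n$ with $A_n=\prod_i\psi(x_{n,i})$, $B_n=\prod_i\phi(x_{n,i})$, and using $A_n^2/B_n=\prod_{i}\tfrac{x_{n,i}(1-x_{n,i})}{4}\le(1/16)^d$, I obtain after summation over $n$
\[
\|\Delta_\cP\|_{\bmo}^2\;\ge\;Q(\cP)\;\ge\;(1/12)^d-N(1/16)^d\;=\;(1/12)^d\bigl(1-N(3/4)^d\bigr).
\]

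To complete the argument I also need the matching identity $\|\Delta_\emptyset\|_{\bmo}^2=(1/12)^d$. The $\ge$ direction comes from $Q(\emptyset)=S_1$ above. For the upper bound I would use that $|\langle\Delta_\emptyset,h_{\bsj,\bsm}\rangle|^2=2^{-4|\bsj|-4d}$ is independent of $\bsm$, together with the disjointness of the intervals $\{I_{\bsj,\bsm}\}_{\bsm\in\DD_{\bsj}}$: for any measurable $U\subseteq[0,1)^d$ one has $\#\{\bsm:I_{\bsj,\bsm}\subseteq U\}\le 2^{|\bsj|}\lambda_d(U)$, hence
\[
\sum_{\bsj,\bsm:I_{\bsj,\bsm}\subseteq U}2^{|\bsj|}|\langle\Delta_\emptyset,h_{\bsj,\bsm}\rangle|^2\;\le\;\lambda_d(U)\sum_{\bsj\in\NN_0^d}2^{-2|\bsj|-4d}\;=\;\lambda_d(U)\,(1/12)^d,
\]
which after division by $\lambda_d(U)$ gives the required bound uniformly in $U$. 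Combining, any $N<(4/3)^d(1-\varepsilon^2)$ forces $\|\Delta_\cP\|_{\bmo}^2>\varepsilon^2\|\Delta_\emptyset\|_{\bmo}^2$, proving the stated lower bound on $N_{\bmo}(\varepsilon,d)$; since $4/3>1$ this is exponential in $d$, i.e., the curse of dimensionality.

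I expect the main technical hurdle to be the careful bookkeeping in the three Parseval identities defining $\phi$, $\psi$, $\rho$ and $S_1$, especially the correct removal of the level-$(-1)$ contribution (which makes $\rho$ coincide with the Brownian-bridge covariance and, crucially, remain pointwise nonnegative so that the off-diagonal terms can be dropped); once those formulae are in hand, both the completing-the-square step and the BMO supremum bound above become short computations, and the final arithmetic $(1/12)(3/4)=1/16$ is immediate.
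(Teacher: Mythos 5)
Your proposal is correct, and it reaches the stated constant $(4/3)^d$ by the same opening move as the paper but then proceeds self-containedly where the paper cites external results. The first step is identical: restricting the supremum in \eqref{eq:defbmo} to $U=[0,1)^d$ is exactly the paper's Lemma~\ref{le3}. From there the paper identifies your quantity $Q(\cP)$ with $(L_{2,N}^{\mathrm{extr}}(\cP))^2$ via the Haar expansion of \cite[Proposition~3]{KrPi22} (Lemma~\ref{le2}) and then invokes the known curse of dimensionality for the extreme $L_2$-discrepancy from \cite[Section~10.5.3]{NW10}; you instead evaluate $Q(\cP)$ directly by the three one-dimensional Parseval sums (your identity for $Q(\cP)$ with the kernel $\rho(x,y)=\min(x,y)-xy$ is precisely the Warnock-type formula for $(L_{2,N}^{\mathrm{extr}})^2$, so you have in effect re-derived Lemma~\ref{le2}) and then rerun Wo\'{z}niakowski's argument: drop the nonnegative off-diagonal terms, complete the square, and bound $\prod_i x_{n,i}(1-x_{n,i})/4\le 16^{-d}$. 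What your route buys is transparency about where the constant comes from --- $4/3=16/12$ falls out of the computation --- whereas the paper obtains $(4/3)^d$ only by weakening the $(9/4)^d$ bound it quotes from \cite{NW10}; your computation of the initial discrepancy coincides with the paper's Lemma~1 (including the counting bound $\#\{\bsm: I_{\bsj,\bsm}\subseteq U\}\le 2^{|\bsj|}\lambda_d(U)$). Two cosmetic points to tidy up in a final write-up: the step $\tfrac{B_n}{N^2}-\tfrac{2A_n}{N}\ge -A_n^2/B_n$ presupposes $B_n>0$, and you should note that if some $x_{n,i}=0$ then $A_n=B_n=0$ and the corresponding term vanishes, so nothing is lost; and the sign of the one-dimensional coefficient $\langle t,h_{j,m}\rangle=-2^{-2j-2}$ should be tracked once to confirm that the cross term really enters with the factor $-2/N$ and the positive kernel $\psi(x)=x(1-x)/2$, as you assert.
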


So the inverse of BMO-discrepancy behaves like the inverse of the classical (star) $L_2$-discrepancy rather than the inverse of the star $L_\infty$-discrepancy (for which we have polynomial tractability according to \cite{HNWW}). The proof of Theorem~\ref{thm1} is not a big deal because we can fall back on powerful auxiliary results. In particular, we use a relation between the BMO-discrepancy and the extreme $L_2$-discrepancy which is interesting on its own and which follows from a recent Haar series expansion of the extreme $L_2$-discrepancy. But first we compute the initial BMO-discrepancy.

\begin{lemma}
The initial BMO-discrepancy in dimension $d$ is $\|\Delta_{\emptyset}\|_{\bmo}=12^{-d/2}$.
\end{lemma}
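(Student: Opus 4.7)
The plan is to compute $\|\Delta_\emptyset\|_{\bmo}$ directly via its Haar series expansion. Since $\cP=\emptyset$ yields $\Delta_{\emptyset}([\bszero,\bst))=-\prod_{i=1}^d t_i$ and the BMO seminorm only sees squared Haar coefficients, it suffices to analyze $f(\bst)=\prod_{i=1}^d t_i$. A short one-dimensional computation, integrating $x$ separately over the two halves of $I_{j,m}$, gives
\[
\langle x, h_{j,m}\rangle = -2^{-2j-2}\qquad (j\in\NN_0,\ m\in\DD_j),
\]
a value which is crucially \emph{independent} of $m$. By the tensor product structure of $f$ and $h_{\bsj,\bsm}$, one immediately obtains
\[
|\langle f, h_{\bsj,\bsm}\rangle|^2 = 2^{-4|\bsj|-4d}
\]
for every $\bsj\in\NN_0^d$ and $\bsm\in\DD_{\bsj}$; in particular this squared coefficient depends only on $|\bsj|$, not on $\bsm$.

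For a measurable $U\subseteq[0,1)^d$, let $N_{\bsj}(U)$ denote the number of $\bsm\in\DD_{\bsj}$ with $I_{\bsj,\bsm}\subseteq U$. Plugging the above into \eqref{eq:defbmo} and using the packing estimate $N_{\bsj}(U)\leq 2^{|\bsj|}\lambda_d(U)$, which holds because the $2^{|\bsj|}$ boxes $I_{\bsj,\bsm}$ (with $\bsj$ fixed) are pairwise disjoint of volume $2^{-|\bsj|}$, one obtains
\[
\frac{1}{\lambda_d(U)}\sum_{\bsj\in\NN_0^d}2^{|\bsj|}\sum_{\bsm:\,I_{\bsj,\bsm}\subseteq U}|\langle f,h_{\bsj,\bsm}\rangle|^2 \;\leq\; 2^{-4d}\sum_{\bsj\in\NN_0^d} 2^{-2|\bsj|} = 2^{-4d}\left(\frac{4}{3}\right)^d = 12^{-d},
\]
where the geometric series factors into $d$ copies of $\sum_{j\geq 0} 4^{-j}=4/3$. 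This gives $\|\Delta_\emptyset\|_{\bmo}^2\leq 12^{-d}$.

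To finish, I would verify that the bound is saturated at $U=[0,1)^d$: there $\lambda_d(U)=1$ and $N_{\bsj}(U)=2^{|\bsj|}$ for every $\bsj \in \NN_0^d$, so the packing estimate becomes an equality simultaneously at every level, and the supremum in \eqref{eq:defbmo} equals $12^{-d}$ exactly. Taking square roots then yields $\|\Delta_\emptyset\|_{\bmo}=12^{-d/2}$. There is no serious obstacle in this plan; the only feature worth flagging is that the product structure of both $f$ and of the unit cube is exactly what lets a single $U$ attain the level-wise packing bound at all $\bsj$ at once, turning the upper bound into an equality.
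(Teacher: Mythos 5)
Your proposal is correct and follows essentially the same route as the paper: compute the Haar coefficients $|\langle \Delta_\emptyset, h_{\bsj,\bsm}\rangle|^2 = 2^{-4(|\bsj|+d)}$, bound the number of $\bsm$ with $I_{\bsj,\bsm}\subseteq U$ by $2^{|\bsj|}\lambda_d(U)$ to get the upper bound $12^{-d}$, and observe that $U=[0,1)^d$ attains it. The only (immaterial) difference is the order in which the two bounds are presented.
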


\begin{proof}
For $\bsj \in \NN_0^d$ and $\bsm \in \DD_{\bsj}$ we have
\begin{eqnarray*}
\langle \Delta_{\emptyset},h_{\bsj,\bsm} \rangle & = & \prod_{i=1}^d \int_0^1  x \, h_{j_i,m_i}(x) \rd x \\
& = &  \prod_{i=1}^d \left(\int_{m_i/2^{j_i}}^{(m_i+1/2)/2^{j_i}} x \rd x - \int_{(m_i+1/2)/2^{j_i}}^{(m_i+1)/2^{j_i}} x \rd x\right) = \frac{(-1)^d}{2^{2 (d+|\bsj|)}}.
\end{eqnarray*} 
Hence, considering $U=[0,1)^d$, we find that
\begin{eqnarray*}
\|\Delta_{\emptyset}\|_{\bmo}^2 \ge  \sum_{\bsj \in \mathbb{N}_0^d} 2^{|\bsj|} \sum_{\bsm \in \DD_{\bsj}} \frac{1}{2^{4 (d+|\bsj|)}} = \sum_{\bsj \in \mathbb{N}_0^d}   \frac{2^{2 |\bsj|}}{2^{4 (d+|\bsj|)}}=\left(\frac{1}{16} \sum_{j=0}^{\infty} \frac{1}{4^j}\right)^d=\frac{1}{12^d}.
\end{eqnarray*}
On the other hand, it is clear that for each $U \subseteq [0,1)^d$ and $\bsj \in \NN_0^d$ there are at most $2^{|\bsj|} \lambda_d(U)$ values of $\bsm \in \DD_{\bsj}$ such that $I_{\bsj,\bsm} \subseteq U$. Hence we have
\begin{eqnarray*}
\|\Delta_{\emptyset}\|_{\bmo}^2 \le \sup_{U \subseteq [0,1)^d} \frac{1}{\lambda_d(U)} \sum_{\bsj \in \mathbb{N}_0^d} 2^{|\bsj|}  2^{|\bsj|} \lambda_d(U)   \ \frac{1}{2^{4 (d+|\bsj|)}} = \sum_{\bsj \in \mathbb{N}_0^d}   \frac{2^{2 |\bsj|}}{2^{4 (d+|\bsj|)}} =\frac{1}{12^d}
\end{eqnarray*}
and the result follows.
\end{proof}

In \cite[Proposition~3]{KrPi22} a Haar-series expansion of the extreme $L_2$-discrepancy (see \eqref{def:extrL2} with $p=2$) is presented, which we state in the next lemma.  

\begin{lemma} \label{le2}
For every $N$-element point set $\cP$ in $[0,1)^d$ we have 
\begin{equation}\label{l2exthaar}
(L_{2,N}^{\mathrm{extr}}(\cP))^2=\sum_{\bsj \in \NN_0^d} 2^{|\bsj|}\sum_{\bsm\in\DD_{\bsj}}|\langle \Delta_{\cP}([\bszero,\cdot)),h_{\bsj,\bsm}\rangle|^2.
\end{equation}
\end{lemma}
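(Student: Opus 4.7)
\medskip

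\textbf{Proof plan for Lemma~\ref{le2}.}

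The plan is to reduce the double integral defining $L_{2,N}^{\mathrm{extr}}(\cP)$ to an unrestricted integral over $[0,1]^{2d}$, then expand the integrand in the Haar basis and invoke orthogonality. First, write $f(\bst):=\Delta_\cP([\bszero,\bst))$. By inclusion--exclusion applied to $\mathbf{1}_{[\bsx,\bsy)}(\bsp)=\prod_i(\mathbf{1}_{[0,y_i)}(p_i)-\mathbf{1}_{[0,x_i)}(p_i))$, one has
\[
\Delta_\cP([\bsx,\bsy)) \;=\; \sum_{A\subseteq[d]}(-1)^{d-|A|}\,f(\bsz^A),
\]
where $\bsz^A_i=y_i$ for $i\in A$ and $\bsz^A_i=x_i$ for $i\notin A$. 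The right-hand side makes sense for \emph{every} $(\bsx,\bsy)\in[0,1]^{2d}$; call this extension $g(\bsx,\bsy)$.

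Next I would exploit the symmetry of $g$. Swapping $x_i\leftrightarrow y_i$ sends $\bsz^A\mapsto \bsz^{A\triangle\{i\}}$ and hence flips the sign of $g$; so $|g|^2$ is invariant under each such swap. Partitioning $[0,1]^{2d}$ into the $2^d$ orthants indexed by which coordinates satisfy $x_i\le y_i$ yields
\[
(L_{2,N}^{\mathrm{extr}}(\cP))^2 \;=\; 2^{-d}\int_{[0,1]^{2d}}|g(\bsx,\bsy)|^2\rd\bsx\rd\bsy.
\]

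Now I would insert the Haar expansion $f=\sum_{\bsj\in\NN_{-1}^d}2^{|\bsj|}\sum_{\bsm\in\DD_{\bsj}}\langle f,h_{\bsj,\bsm}\rangle h_{\bsj,\bsm}$ into the formula for $g$. A direct computation gives
\[
g(\bsx,\bsy)\;=\;\sum_{\bsj\in\NN_{-1}^d}2^{|\bsj|}\sum_{\bsm\in\DD_{\bsj}}\langle f,h_{\bsj,\bsm}\rangle\,H_{\bsj,\bsm}(\bsx,\bsy),\qquad H_{\bsj,\bsm}(\bsx,\bsy):=\prod_{i=1}^d\bigl(h_{j_i,m_i}(y_i)-h_{j_i,m_i}(x_i)\bigr),
\]
since the inclusion--exclusion sum over $A$ factors as $\prod_i(h_{j_i,m_i}(y_i)-h_{j_i,m_i}(x_i))$. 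Then I would compute the inner products $\langle H_{\bsj,\bsm},H_{\bsj',\bsm'}\rangle_{L_2([0,1]^{2d})}$ by tensor product: the $i$-th factor equals $2\langle h_{j_i,m_i},h_{j'_i,m'_i}\rangle-2\langle h_{j_i,m_i},1\rangle\langle h_{j'_i,m'_i},1\rangle$, which vanishes whenever $j_i=-1$ or $j'_i=-1$ (for $j_i=-1$, one has $h_{-1,0}\equiv 1$, making the two terms cancel), and equals $2\cdot 2^{-j_i}\delta_{(j_i,m_i),(j'_i,m'_i)}$ when $j_i,j'_i\ge 0$. Thus the cross terms vanish and the surviving diagonal contributions are restricted to $\bsj\in\NN_0^d$; multiplying the factors $2\cdot 2^{-j_i}$ gives $2^d\cdot 2^{-|\bsj|}$.

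Plugging everything back in,
\[
\int_{[0,1]^{2d}}|g|^2\rd\bsx\rd\bsy\;=\;\sum_{\bsj\in\NN_0^d}\sum_{\bsm\in\DD_{\bsj}}2^{2|\bsj|}|\langle f,h_{\bsj,\bsm}\rangle|^2\cdot 2^d\,2^{-|\bsj|},
\]
and dividing by $2^d$ yields exactly \eqref{l2exthaar}. The main obstacle, and the only nontrivial point, is recognizing the $2^{-d}$ symmetrization step and noticing that the cancellation in the $1$-D inner product kills precisely those $\bsj$ with a coordinate $-1$; once that is in hand the rest is a bookkeeping exercise.
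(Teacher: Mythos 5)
Your argument is correct. Note that the paper itself does not prove Lemma~\ref{le2}; it only cites \cite[Proposition~3]{KrPi22}, so there is no in-paper proof to compare against. Checking your steps: the inclusion--exclusion identity $\Delta_\cP([\bsx,\bsy))=\sum_{A\subseteq[d]}(-1)^{d-|A|}f(\bsz^A)$ is valid on the region $\bsx\le\bsy$ (both for the counting part and for the volume part); the sign-flip of $g$ under each swap $x_i\leftrightarrow y_i$ is right, so the $2^{-d}$ symmetrization identity holds; and the one-dimensional computation $2\langle h,h'\rangle-2\langle h,1\rangle\langle h',1\rangle$ correctly kills every factor with $j_i=-1$ or $j_i'=-1$ and produces $2\cdot 2^{-j_i}$ on the diagonal, so the bookkeeping $2^{2|\bsj|}\cdot 2^d 2^{-|\bsj|}/2^d=2^{|\bsj|}$ gives exactly \eqref{l2exthaar}. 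This is a genuinely self-contained derivation, and the mechanism it exposes (the symmetrization over orthants and the cancellation at level $-1$) is exactly what explains the remark following the lemma, namely that the extreme $L_2$-discrepancy is the star $L_2$-discrepancy with the order\mbox{-}$(-1)$ terms removed.

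One point you should make explicit in a written version: the interchange of the Haar expansion of $f$ with the integral over $[0,1]^{2d}$. This is harmless but needs a sentence: $f=\Delta_\cP([\bszero,\cdot))$ is bounded, hence in $L_2$, the partial sums of its Haar series converge to $f$ in $L_2([0,1]^d)$, and each substitution $(\bsx,\bsy)\mapsto\bsz^A$ is measure preserving after integrating out the unused coordinates, so $\|f_n(\bsz^A)-f(\bsz^A)\|_{L_2([0,1]^{2d})}=\|f_n-f\|_{L_2([0,1]^d)}\to 0$; therefore the finite sum defining $g$ converges in $L_2([0,1]^{2d})$ and the cross-term computation for the partial sums passes to the limit (the surviving diagonal series is dominated by Parseval's identity for $f$, so it converges). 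With that sentence added, the proof is complete.
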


Note that in formula \eqref{l2exthaar} the Haar coefficients of the local discrepancy of {\it anchored} intervals appear. Comparing formula \eqref{l2exthaar} with the corresponding formula for the star $L_2$-discrepancy, which is $$(L_{2,N}^{\mathrm{star}}(\cP))^2=\sum_{\bsj \in \NN_{-1}^d} 2^{|\bsj|}\sum_{\bsm\in\DD_{\bsj}}|\langle \Delta_{\cP}([\bszero,\cdot)),h_{\bsj,\bsm}\rangle|^2,$$ it can be observed that the only difference is that in the case of extreme $L_2$-discrepancy the terms of order $-1$ are not present. This also shows that the extreme $L_2$-discrepancy is dominated by the star $L_2$-discrepancy.

For us, however, the following is important. Comparing \eqref{l2exthaar} with the definition of the BMO seminorm \eqref{eq:defbmo} for the local discrepancy and taking $U=[0,1)^d$ immediately implies the following lemma, which is interesting on its own and which is the key to the proof of Theorem~\ref{thm1}.

\begin{lemma}\label{le3}
For every $N$-element point set $\cP$ in $[0,1)^d$ we have $$\|\Delta_{\cP}\|_{\bmo} \ge L_{2,N}^{\mathrm{extr}}(\cP).$$
\end{lemma}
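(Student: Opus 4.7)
The plan is to exploit the fact that the BMO seminorm is defined as a \emph{supremum} over all measurable sets $U\subseteq[0,1)^d$: any single concrete choice of $U$ yields a lower bound on $\|\Delta_{\cP}\|_{\bmo}$. The natural choice here is $U=[0,1)^d$, because with this choice the containment condition $I_{\bsj,\bsm}\subseteq U$ becomes vacuous and the normalising factor $1/\lambda_d(U)$ equals $1$.

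Concretely, I would first recall that in the present context $\Delta_{\cP}$ stands for the local discrepancy function $\bst\mapsto \Delta_{\cP}([\bszero,\bst))$ associated with anchored test boxes. Substituting $U=[0,1)^d$ into \eqref{eq:defbmo} gives
\begin{equation*}
\|\Delta_{\cP}\|_{\bmo}^2 \;\ge\; \sum_{\bsj \in \NN_0^d} 2^{|\bsj|} \sum_{\bsm \in \DD_{\bsj}} \bigl|\langle \Delta_{\cP}([\bszero,\cdot)), h_{\bsj,\bsm}\rangle\bigr|^2,
\end{equation*}
since every dyadic box $I_{\bsj,\bsm}$ with $\bsj\in\NN_0^d$ lies inside $[0,1)^d$. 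The right-hand side is precisely the Haar expansion appearing in Lemma~\ref{le2}, so it equals $(L_{2,N}^{\mathrm{extr}}(\cP))^2$. Taking square roots delivers the claim.

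One consistency check I would point out (but not dwell on) is that the index sets match exactly: the BMO definition sums over $\bsj\in\NN_0^d$, and Lemma~\ref{le2} sums over the same set (in contrast to the star $L_2$-discrepancy, whose Haar expansion involves the additional terms with some $j_i=-1$). This alignment is exactly what makes $U=[0,1)^d$ the right choice, and it is the conceptual reason why the BMO-discrepancy is bounded below by the extreme — rather than the star — $L_2$-discrepancy. There is no real obstacle beyond identifying the correct test set $U$; the whole proof is a one-line comparison once the right $U$ is picked.
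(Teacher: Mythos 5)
Your proof is correct and is exactly the argument the paper gives: choosing $U=[0,1)^d$ in the supremum defining \eqref{eq:defbmo} and matching the resulting sum with the Haar expansion of Lemma~\ref{le2}. Nothing further is needed.
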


Now we can give the proof of Theorem~\ref{thm1}.

\begin{proof}[Proof of Theorem~\ref{thm1}]
It is easily shown (or see \cite[p.~33]{NW10}) that also the initial extreme $L_2$-discrepancy equals $12^{-d/2}$. This shows that $\|\Delta_{\emptyset}\|_{\bmo}=L_{2,0}^{\mathrm{extr}}(\emptyset)$. Hence, using Lemma~\ref{le3},
\begin{eqnarray*}
N_{\bmo}(\varepsilon,d) & = & \min\{N \in \NN \ : \ {\rm disc}_{\bmo}(N,d) \le \varepsilon \|\Delta_{\emptyset}\|_{\bmo}\}\\
& \ge &  \min\{N \in \NN \ : \ {\rm disc}_{L_2^{{\rm extr}}}(N,d) \le \varepsilon L_{2,0}^{\mathrm{extr}}(\emptyset)\}=  N_{L_2^{{\rm extr}}}(\varepsilon,d).
\end{eqnarray*}
For the latter quantity however it follows from \cite[Section~10.5.3, p.~94]{NW10} that $$N_{L_2^{{\rm extr}}}(\varepsilon,d) \ge \left(\frac{9}{4}\right)^d(1-\varepsilon^2).$$ Hence the result follows.
\end{proof}

\begin{remark}\label{re:rt}\rm
Another interesting consequence of Lemma~\ref{le3} is the following. From \cite[Theorem~6]{HKP20} (or also  \cite[Corollary~4]{KrPi22}) it is known that for every $d \in \NN$ there exists a positive real $c_d$ with the property that for every $N$-element point set $\cP$ in $[0,1)^d$ we have $$L_{2,N}^{{\rm extr}}(\cP) \ge c_{d} \frac{(1+\log N)^{\frac{d-1}{2}}}{N}.$$ Now from Lemma~\ref{le3} it follows that the same lower bound applies to the BMO-discrepancy for every $N$-element point set in $[0,1)^d$. Thus, both discrepancies satisfy a Roth-type lower bound like the usual (star) $L_2$-discrepancy (see \cite{Roth}). The Roth-type lower bound for the BMO-discrepancy has been proved earlier by Bilyk, Lacey, Parissis and Vagharshakyan~\cite[Theorem~1.6]{BLPV} for $d=2$ and by Bilyk and Markhasin~\cite[Theorem~1.2]{BM15} for $d \ge 3$.
\end{remark}

\vspace{0.5cm}
\noindent{\bf Author's Address:}\\
\noindent Friedrich Pillichshammer, Institut f\"{u}r Finanzmathematik und Angewandte Zahlentheorie, Universit\"{a}t Linz, Altenbergerstra{\ss}e 69, A-4040 Linz, Austria. 

\noindent Email: friedrich.pillichshammer@jku.at

\end{document}